\theoremstyle{plain}
\newtheorem{theorem}{Theorem}
\numberwithin{equation}{section}
\newcommand{\ra}{\rightarrow}
\newcommand{\mdot}{\,\begin{picture}(-1,-1)(-1,-1)\circle*{2}\end{picture}\ }
\begin{document}

\title {Painlev\'e IV and a third-order viewpoint}

\date{}

\author[P.L. Robinson]{P.L. Robinson}

\address{Department of Mathematics \\ University of Florida \\ Gainesville FL 32611  USA }

\email[]{paulr@ufl.edu}

\subjclass{} \keywords{}

\begin{abstract}

We take a third-order approach to the fourth Painlev\'e equation and indicate the value of such an approach to other second-order ODEs in the Painlev\'e-Gambier list of 50.  

\end{abstract}

\maketitle

\medbreak

\section*{Introduction} 

\medbreak 

Among the full list of six, the fourth Painlev\'e equation ${\bf PIV}$ is the most complicated for which all solutions are meromorphic throughout the complex plane. ${\bf PIV}$ has the form 
$$\overset{\mdot \mdot}{w} = F(z, w, \overset{\mdot}{w})$$
where the right side is rational in all three variables: it fails to be polynomial in all variables only in having $w$ in the denominator; for its precise form, see below. On the one hand, this means that the standard (local) existence-uniqueness theorem for second-order ODEs applies to ${\bf PIV}$ with initial data in which $w(a) \ne 0$ and $\overset{\mdot}{w}(a)$ are specified; on the other hand, such a standard theorem does not apply to ${\bf PIV}$ with initial data involving $w(a) = 0$. Further differentiation produces a third-order ODE of the form 
$$\overset{\mdot \mdot \mdot}{w} = G(z, w, \overset{\mdot}{w}, \overset{\mdot \mdot}{w})$$
where the right side is polynomial in all variables (and $\overset{\mdot \mdot}{w}$ is absent). A standard (local) existence-uniqueness theorem for third-order ODEs applies to this equation: locally, there exists a unique solution $w$ for which $w(a), \; \overset{\mdot}{w}(a)$ and $ \overset{\mdot \mdot}{w}(a)$ take specified values. This circumstance has its consequences for ${\bf PIV}$, some of which we address. One reason for the simplification that arises upon passage to third order is that derivatives enter ${\bf PIV}$ only in the combination $\overset{\mdot \mdot}{w} - \overset{\mdot}{w}^2/2w$. This precise combination appears in a further dozen of the 50 canonical forms that stem from the analysis of Painlev\'e and Gambier as listed by Ince in [1]; the third-order approach may be profitably considered there also, as we illustrate in a couple of cases. 

\medbreak 

\section*{Painlev\'e IV}

\medbreak 

The precise form of the fourth Painlev\'e equation in the literature varies as to the naming of its parameters; we find it convenient to adopt the form 
\begin{equation} \label{PIV} 
\overset{\mdot \mdot}{w} = \frac{\overset{\mdot}{w}^2}{2 w} + \frac{3}{2} w^3 + 4 z w^2 + 2 (z^2 - \alpha) w - \frac{\beta^2}{2 w}. \tag{{$\bf PIV$}}
\end{equation}
This is the form taken in [1] when the equation appears as ${\bf XXXI}$ in the list of 50 canonical forms; upon the extraction of ${\bf PIV}$ as fourth in the list of six Painlev\'e equations, the parameter $\beta^2$ is relabelled as $- 2 \beta$. The ratio $(\overset{\mdot}{w}^2 - \beta^2)/2w$ on the right side of ${\bf PIV}$ is to be understood as a limit when appropriate: thus, if $w$ vanishes at $a$ then its first derivative at $a$ satisfies $\overset{\mdot}{w}(a)^2 = \beta^2$ or $\overset{\mdot}{w}(a) = \pm \beta$; this convenience is the reason behind our choice of form.  

\medbreak 

It is natural to view ${\bf PIV}$ as a complex equation: when we do so, the differentiability of a solution entails its analyticity where defined; analyticity at an (isolated) zero involves the Riemann continuation theorem. We may instead view ${\bf PIV}$ as a real equation: where defined, each solution is then plainly thrice-differentiable (and better) away from its zeros; at each zero, the limit understanding of the ratio on the right side of the equation renders the second derivative continuous, higher differentiability following by the mean value theorem. 

\medbreak 

Calculation of the third derivative is most conveniently effected after rearrangement, thus 
$$2 w \overset{\mdot \mdot}{w} = \overset{\mdot}{w}^2 + 3 w^4 + 8 z w^3 + 4 (z^2 - \alpha) w^2 - \beta^2.$$
Upon differentiation away from zeros, $2 \overset{\mdot}{w} \overset{\mdot \mdot}{w}$ cancels from each side to yield 
$$2 w \overset{\mdot \mdot \mdot}{w} = 12 w^3 \overset{\mdot}{w} + 24 z w^2 \overset{\mdot}{w} + 8 w^3 + 8 (z^2 - \alpha) w \overset{\mdot}{w} + 8 z w^2$$
whence 
$$\overset{\mdot \mdot \mdot}{w} = \{ 6 w^2 + 12 z w + 4 (z^2 - \alpha) \} \overset{\mdot}{w} + 4 (w + z) w.$$
Now let $a$ be an isolated zero of $w$: if $z \ra a$ then $w(z) \ra w(a) = 0$ and $\overset{\mdot}{w}(z) \ra \overset{\mdot}{w}(a) = \pm \beta$ so that $\overset{\mdot \mdot \mdot}{w}(z) \ra 4 (a^2 - \alpha) \overset{\mdot}{w}(a)$; it follows that $w$ is thrice-differentiable at $a$ with 
$$\overset{\mdot \mdot \mdot}{w}(a) = 4 (a^2 - \alpha) \overset{\mdot}{w}(a).$$
In other words, $w$ continues to satisfy the foregoing third-order equation at isolated zeros. 

\medbreak 

\begin{theorem} \label{third} 
If $w$ is a solution to ${\bf PIV}$ then $w$ satisfies the third-order ODE 
\begin{equation} \label{PIV'} 
\overset{\mdot \mdot \mdot}{w} = \{ 6 w^2 + 12 z w + 4 (z^2 - \alpha) \} \overset{\mdot}{w} + 4 (w + z) w. \tag{{$\bf PIV'$}}
\end{equation}
\end{theorem}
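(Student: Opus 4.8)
The plan is to split the argument into two regimes — the open set where $w$ does not vanish and the isolated zeros of $w$ — and simply to assemble the two computations already carried out in the paragraphs preceding the statement; no ingredient beyond those is required.

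On the set where $w \ne 0$ I would begin from the cleared-denominator rearrangement $2 w \overset{\mdot \mdot}{w} = \overset{\mdot}{w}^2 + 3 w^4 + 8 z w^3 + 4 (z^2 - \alpha) w^2 - \beta^2$ of ${\bf PIV}$, differentiate both sides with respect to $z$, note that the term $2 \overset{\mdot}{w} \overset{\mdot \mdot}{w}$ occurs on each side and therefore cancels, and divide the surviving identity by $2 w$ — legitimate precisely because $w$ is nonzero here. This produces ${\bf PIV'}$ on that open set. In the real reading one should note in passing that a solution is indeed three times differentiable away from its zeros, which is immediate since there ${\bf PIV}$ expresses $\overset{\mdot \mdot}{w}$ rationally in $z$, $w$, $\overset{\mdot}{w}$.

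To extend ${\bf PIV'}$ across an isolated zero $a$ of $w$ I would run the limiting argument indicated above: ${\bf PIV'}$ holds on a punctured neighbourhood of $a$, its right-hand side is a polynomial in $z$, $w$, $\overset{\mdot}{w}$, and letting $z \to a$ with $w(z) \to 0$ and $\overset{\mdot}{w}(z) \to \overset{\mdot}{w}(a) = \pm \beta$ — the latter being the limit understanding built into ${\bf PIV}$ — gives $\overset{\mdot \mdot \mdot}{w}(z) \to 4 (a^2 - \alpha) \overset{\mdot}{w}(a)$, which since $w(a) = 0$ is exactly the value the right-hand side of ${\bf PIV'}$ prescribes at $a$. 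The one genuinely delicate point is promoting this limit of $\overset{\mdot \mdot \mdot}{w}$ to an honest value $\overset{\mdot \mdot \mdot}{w}(a)$: in the complex reading this is Riemann continuation applied to the already-continuous $\overset{\mdot \mdot}{w}$, and in the real reading it is the mean-value-theorem fact that a continuous function whose derivative has a limit at $a$ is differentiable there with that limit as derivative, applied to $\overset{\mdot \mdot}{w}$. As this thrice-differentiability at zeros has already been recorded in the remarks before the theorem, in the write-up it is a citation rather than an obstacle; I would also note that a solution's zeros are isolated — a non-isolated zero would force $w \equiv 0$, in which case ${\bf PIV'}$ holds trivially — so that the two regimes exhaust all points where $w$ is defined.
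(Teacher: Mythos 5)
Your proposal is correct and follows essentially the same route as the paper, whose ``proof'' simply points back to the preceding paragraphs: clear the denominator, differentiate, cancel $2 \overset{\mdot}{w} \overset{\mdot \mdot}{w}$, divide by $2w$ away from zeros, and then pass to the limit at an isolated zero using $\overset{\mdot}{w}(a) = \pm \beta$, with Riemann continuation (complex case) or the mean value theorem (real case) promoting the limit to an actual third derivative. The only difference is that the paper flatly \emph{assumes} the zeros are isolated in its proof, whereas you additionally argue that a non-isolated zero forces $w \equiv 0$ --- a tidy extra remark that is immediate by the identity theorem in the complex reading, though it would need a word of justification in the real reading.
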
 

\begin{proof} 
The proof precedes the statement, in which we assume the zeros of the solution to be isolated. 
\end{proof}  

\medbreak 

As mentioned in the Introduction, the form of ${\bf PIV'}$ guarantees that it satisfies the standard (local) existence-uniqueness theorem appropriate to third-order ODEs: locally, there exists a unique solution $w$ to ${\bf PIV}$ for which $w(a), \; \overset{\mdot}{w}(a)$ and $ \overset{\mdot \mdot}{w}(a)$ assume arbitrarily specified values. As also mentioned in the Introduction, the standard existence-uniqueness theorem appropriate to second-order ODEs is not applicable to ${\bf PIV}$ itself: in general, there is no solution to ${\bf PIV}$ with $w(a) = 0$ unless $\overset{\mdot}{w}(a) = \pm \beta$; in case $\beta = 0$, if $w$ satisfies $w(a) = 0$ then $\overset{\mdot}{w}(a) = 0$ and the standard second-order theorem would force $w$ to vanish identically near $a$ (which it need not do). At the risk of repetition, it is of course the case that not all solutions to ${\bf PIV'}$ satisfy ${\bf PIV}$: regarding those that do, if $w(a) \ne 0$ and $\overset{\mdot}{w}(a)$ are specified then $\overset{\mdot \mdot}{w}(a)$ must be as determined from ${\bf PIV}$, while if $w(a) = 0$ then $\overset{\mdot}{w}(a) = \pm \beta$ necessarily.

\medbreak 

Our passage to the third-order equation ${\bf PIV'}$ has applications to the Painlev\'e equation ${\bf PIV}$ itself. We first consider the special case in which $\beta$ is zero. 

\medbreak 

\begin{theorem} \label{zero}
Let $w$ be a solution to ${\bf PIV}$ in case $\beta = 0$. If $w$ has an isolated zero at $a$ then $\overset{\mdot \mdot}{w}(a) \ne 0$. 
\end{theorem}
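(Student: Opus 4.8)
The plan is to argue by contradiction, leaning on the third-order equation ${\bf PIV'}$ and, crucially, on the uniqueness half of its existence-uniqueness theorem. So suppose $w$ solves ${\bf PIV}$ with $\beta = 0$, has an isolated zero at $a$, and — contrary to the assertion — satisfies $\overset{\mdot \mdot}{w}(a) = 0$.

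First I would pin down the jet of $w$ at $a$. Since $\beta = 0$, the limit understanding of the ratio on the right of ${\bf PIV}$ forces $\overset{\mdot}{w}(a)^2 = \beta^2 = 0$, so $\overset{\mdot}{w}(a) = 0$; together with $w(a) = 0$ and the (provisional) hypothesis this yields $w(a) = \overset{\mdot}{w}(a) = \overset{\mdot \mdot}{w}(a) = 0$. Now invoke Theorem \ref{third}: because the zero is isolated, $w$ satisfies ${\bf PIV'}$ on a punctured neighbourhood of $a$ and, by the computation preceding Theorem \ref{third}, also at $a$ itself, hence on a full neighbourhood of $a$. The constant function $0$ is patently another solution of ${\bf PIV'}$, and it carries the same data $w(a) = \overset{\mdot}{w}(a) = \overset{\mdot \mdot}{w}(a) = 0$ at $a$. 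Since the right side of ${\bf PIV'}$ is polynomial in $(w, \overset{\mdot}{w}, \overset{\mdot \mdot}{w})$, hence locally Lipschitz (indeed analytic), the standard existence-uniqueness theorem for third-order ODEs forces $w$ to coincide with $0$ throughout a neighbourhood of $a$. But then $a$ is not an isolated zero of $w$, a contradiction; therefore $\overset{\mdot \mdot}{w}(a) \ne 0$.

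The argument is short, and the single point that needs care is precisely the legitimacy of applying uniqueness — which is exactly what the passage to the regular third-order form ${\bf PIV'}$ buys us, and which would be unavailable for ${\bf PIV}$ itself at a zero. I would also note an alternative, more computational route in the analytic reading: if $w$ vanishes to order $k$ at $a$ then $k$ is finite (as $w \not\equiv 0$) and $k \ge 2$ (as $\overset{\mdot}{w}(a) = 0$), and matching lowest-order terms in $2 w \overset{\mdot \mdot}{w} = \overset{\mdot}{w}^2 + 3 w^4 + 8 z w^3 + 4(z^2 - \alpha) w^2$ (recall $\beta = 0$) gives $2k(k-1) = k^2$, so $k = 2$ and $\overset{\mdot \mdot}{w}(a) \ne 0$; but this presumes finiteness of the order of vanishing, automatic analytically yet not for a merely real solution, so I would present the uniqueness argument as the main one.
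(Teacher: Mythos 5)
Your argument is correct and is essentially the paper's own proof: both deduce $\overset{\mdot}{w}(a)=0$ from $\beta=0$, pass to ${\bf PIV'}$ via Theorem \ref{third}, and invoke uniqueness for the third-order equation against the zero solution to contradict isolatedness. The extra computational aside on the order of vanishing is a nice supplement but not needed.
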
 

\begin{proof} 
Note that $w$ is a solution to ${\bf PIV'}$ by Theorem \ref{third}; note also that the vanishing of $w(a)$ implies the vanishing of $\overset{\mdot}{w}(a)$. If $\overset{\mdot \mdot}{w}(a)$ vanishes too, then the standard existence-uniqueness theorem for the third-order equation ${\bf PIV'}$ forces $w$ to vanish near $a$; consequently, the zero at $a$ cannot be isolated.  
\end{proof} 

\medbreak 

Let us now take both parameters $\alpha$ and $\beta$ to be zero, and write ${\bf PIV}_0$ for the resulting version of the fourth Painlev\'e equation, thus: 
$$\overset{\mdot \mdot}{w} = \frac{\overset{\mdot}{w}^2}{2 w} + \frac{3}{2} w^3 + 4 z w^2 + 2 z^2 w.$$
View this as a real equation, with real solutions. Let the real solution $w$ to real ${\bf PIV}_0$ have $a$ as an isolated zero. By Theorem \ref{zero} it follows that either $\overset{\mdot \mdot}{w}(a) > 0$ or $\overset{\mdot \mdot}{w}(a) < 0$: in the former case, $\overset{\mdot}{w}(t)$ passes from strictly negative to strictly positive as $t$ increases through $a$;  in the latter case, $\overset{\mdot}{w}(t)$ passes from strictly positive to strictly negative as $t$ increases through $a$. Now restrict to an open interval $I \ni a$ in which $a$ is the unique zero of $w$. The rule 
 \begin{equation*}
    f(t)=
    \begin{cases}
      - \sqrt{w(t)} & \text{if}\ I \ni t \leqslant a \\
      + \sqrt{w(t)} & \text{if} \ I \ni t \geqslant a 
    \end{cases}
  \end{equation*}
then defines on $I$ a square-root $f$ of $w$ that satisfies the second-order ODE 
$$4 \overset{\mdot \mdot}{f} = f (3 f^2 + 2 t)(f^2 + 2 t).$$
Observe that, unlike ${\bf PIV}$ itself, this differential equation for $f$ obeys the standard existence-uniqueness theorem for second-order ODEs.  For more detail on this, see particularly Theorem 5 in [2]. Incidentally, notice that Theorem \ref{zero} of the present paper justifies the discussion that surrounds Theorem 8 (and thereby supports Theorem 5) of [2] without invoking the (difficult) meromorphicity of solutions to ${\bf PIV}$. 

\medbreak 

\section*{Twelve more ODEs}

\medbreak 

Passage to a third-order ODE has benefits not only for the fourth Painlev\'e equation but also for other second-order ODEs among the 50 canonical forms that result from the Painlev\'e-Gambier classification as listed in [1]. The third-order approach succeeds in part because derivatives enter ${\bf PIV}$ in the precise combination 
$$\overset{\mdot \mdot}{w} - \frac{\overset{\mdot}{w}^2}{2 w} = \frac{2 w \overset{\mdot \mdot}{w} - \overset{\mdot}{w}^2}{2 w}$$
and 
$$\big(2 w \overset{\mdot \mdot}{w} - \overset{\mdot}{w}^2\big)^{\mdot} = 2 w \: \overset{\mdot \mdot \mdot}{w}.$$
Now, a further twelve equations in the list of 50 feature derivatives in just this combination; accordingly, analogous passage to the associated third-order ODE may be contemplated in these cases, too. We have already taken such an approach in [3], where we analyzed the relationship between the (homogeneous) second Painlev\'e equation and equation ${\bf XX}$ from the list of 50. In the present section, we focus primarily on the practical utility of passage to third order, which we illustrate by a couple more cases.  

\medbreak 

The thirty-second equation in the list of 50 has the following form: 
\begin{equation} \label{XXXII} 
\overset{\mdot \mdot}{w} =  \frac{\overset{\mdot}{w}^2}{2 w} - \frac{1}{2 w} =  \frac{\overset{\mdot}{w}^2 - 1}{2 w}. \tag{{$\bf XXXII$}}
\end{equation}
It is recorded in [1] that the substitution $w = u^2$ engenders a first integral (though the recorded first integral contains a minor misprint). In fact 
$$2 \overset{\mdot}{u}^2 + 2 u \overset{\mdot \mdot}{u} = \overset{\mdot \mdot}{w} =  \frac{\overset{\mdot}{w}^2}{2 w} - \frac{1}{2 w} = \frac{(2 u \overset{\mdot}{u})^2}{2 u^2} - \frac{1}{2 u^2} = 2 \overset{\mdot}{u}^2 - \frac{1}{2 u^2}$$
so that 
$$2 \overset{\mdot}{u} \overset{\mdot \mdot} {u} = - \frac{\overset{\mdot}{u}}{2 u^3}$$
which leads immediately to the first integral 
$$\overset{\mdot}{u}^2 = K + \frac{1}{4 u^2}.$$
It is certainly possible to integrate further, so as to determine $u$ and thereby determine $w$. However, it is instructive to pass directly from ${\bf XXXII}$ to the associated third-order equation. Thus: rearrange to obtain 
$$2 w \overset{\mdot \mdot}{w} = \overset{\mdot}{w}^2 - 1$$
and differentiate to deduce  
$$2 w \: \overset{\mdot \mdot \mdot}{w} = 0$$
wherefrom it is immediate that $w$ is (at most) a quadratic  
$$w = a z^2 + b z + c;$$
all that remains is to filter this solution to the third-order equation through ${\bf XXXII}$ itself, which yields the constraint $b^2 - 4 a c = 1$. Passage to third-order has furnished a surprisingly direct route to the solution of this particular second-order equation. 

\medbreak 

The $`m = 2$' case of the seventeenth equation in the list of 50 has the simpler form
\begin{equation} \label{XVII} 
\overset{\mdot \mdot}{w} = \frac{\overset{\mdot}{w}^2}{2 w}. \tag{{$\bf XVII$}}
\end{equation}
We may approach this equation by first rearranging it as $2 \overset{\mdot \mdot}{w}/\overset{\mdot}{w} = \overset{\mdot}{w}/w$ and so deducing the first integral $\overset{\mdot}{w}^2 = K w$ with an elementary solution for $w$; alternatively, we may rearrange the equation as $2 (\overset{\mdot}{w}/w)^{\mdot} = - (\overset{\mdot}{w}/w)^2$ and proceed accordingly. Instead, we may pass directly to the associated third-order equation: thus, rearrangement and differentiation again lead to 
$$2 w \: \overset{\mdot \mdot \mdot}{w} = 0$$
and thence to the quadratic $w = a z^2 + b z + c$; satisfaction of ${\bf XVII}$ requires that $b^2 - 4 a c = 0$ whence the quadratic is a perfect square. Again, passage to third order has proved to be  practically efficient. 

\medbreak 

As a last illustration of the third-order approach, we take the twenty-ninth equation in the list of 50: 
\begin{equation} \label{XXIX} 
\overset{\mdot \mdot}{w} = \frac{\overset{\mdot}{w}^2}{2 w} + \frac{3}{2} w^3. \tag{{$\bf XXIX$}}
\end{equation}
In this case, the rearrangement $2 w \overset{\mdot \mdot}{w} = \overset{\mdot}{w}^2 + 3 w^4$ leads after cancellation to the third-order equation 
$$\overset{\mdot \mdot \mdot}{w} = 6 w^2 \overset{\mdot}{w} = (2 w^3)^{\mdot}$$
from which we deduce that 
$$\overset{\mdot \mdot} {w} = 2 w^3 + k;$$
multiplication by $2 \overset{\mdot}{w}$ throughout generates the first integral 
$$\overset{\mdot}{w}^2 = w^4 + K w + L$$
and filtration through ${\bf XXIX}$ shows that the constant $L$ is zero. 

\bigbreak 

For the handling of some equations, an alternative identity may rival 
$$\big(2 w \overset{\mdot \mdot}{w} - \overset{\mdot}{w}^2\big)^{\mdot} = 2 w \: \overset{\mdot \mdot \mdot}{w}.$$
Explicitly, notice that 
$$\Big( \frac{\overset{\mdot}{w}^2}{w} \Big)^{\mdot} = \frac{2 \overset{\mdot}{w} \overset{\mdot \mdot}{w}}{w} - \frac{\overset{\mdot}{w}^3}{w^2} = 2 \frac{\overset{\mdot}{w}}{w} \Big( \overset{\mdot \mdot}{w} - \frac{\overset{\mdot}{w}^2}{2 w} \Big)$$
or 
$$\Big( \frac{\overset{\mdot}{w}^2}{w} \Big)^{\mdot} =  \frac{\overset{\mdot}{w}}{w^2} \big( 2 w \overset{\mdot \mdot}{w} - \overset{\mdot}{w}^2 \big).$$

\bigbreak 

For example, application of this identity to ${\bf XXIX}$ leads promptly to 
$$\Big( \frac{\overset{\mdot}{w}^2}{w} \Big)^{\mdot} = 3 w^2 \overset{\mdot}{w} = (w^3)^{\mdot}$$
and so to the first integral 
$$\overset{\mdot}{w}^2 = w^4 + K w.$$

\medbreak 

While on the topic of this rival identity, we shall demonstrate its usefulness in solving equation ${\bf XXXII}$ with which we began this section. Substitution of ${\bf XXXII}$ into the rival identity gives 
$$\Big( \frac{\overset{\mdot}{w}^2}{w} \Big)^{\mdot} = 2 \frac{\overset{\mdot}{w}}{w} \Big(- \frac{1}{2 w} \Big) = - \frac{\overset{\mdot}{w}}{w^2} = \Big( \frac{1}{w} \Big)^{\mdot}$$
whence (with a judicious naming of the constant of integration) 
$$\overset{\mdot}{w}^2 = 1 + 4 a w$$
and we have arrived at a first integral. Rather than press on and integrate one last time we step back and take a further derivative, following the same apparently perverse inclination that led us to pass from the fourth Painlev\'e equation to third order: thus 
$$2 \overset{\mdot}{w} \overset{\mdot \mdot}{w} = 4 a \overset{\mdot}{w}$$
so that shortly $\overset{\mdot \mdot}{w} = 2 a$ and $w = a z^2 + b z + c$ (with $b^2 - 4 a c = 1$ as before). 

\medbreak 

We remark that this same rival identity is also effective in handling ${\bf XVII}$, ${\bf XVIII}$ and ${\bf XIX}$, along with other equations in the list of 50, to the enjoyment of which we leave the reader. 

\bigbreak

\begin{center} 
{\small R}{\footnotesize EFERENCES}
\end{center} 
\medbreak 

[1] E.L. Ince. {\it Ordinary Differential Equations}, Longman, Green and Company (1926); Dover Publications (1956). 

\medbreak 

[2] P.L. Robinson, {\it Painlev\'e IV: roots and zeros}, arXiv 1609.02494 (2016). 

\medbreak 

[3] P.L. Robinson, {\it Painlev\'e II transcendents and their squares}, arXiv 1609.07166 (2016).

\end{document}